\newcommand{\R}{{\mathbb R}}
\newcommand{\Ric}{\mathrm{Ric}}
\theoremstyle{plain}
\newtheorem{theorem}{Theorem}[section]
\newtheorem{remark}[theorem]{Remark}
\newtheorem{corollary}[theorem]{Corollary}
\newtheorem{lemma}[theorem]{Lemma}
\numberwithin{equation}{section}
\title{Rigidity of the gradient estimate for Einstein manifolds}
\author[Lee]{Sanghoon Lee}
\address{Department of Mathematics  \\  The Chinese University of Hong Kong \\ Shatin, N.T., Hong Kong}
\email{sanghoonlee@cuhk.edu.hk}
\author[Park]{Jiewon Park}
\address{Korea Advanced Institute of Science and Technology (KAIST)\\
Department of Mathematical Sciences\\
Daejeon, South Korea}
\email{jiewonpark@kaist.ac.kr}
\begin{document}



\begin{abstract}
We study the rigidity of Ricci-flat manifolds with quadratic curvature decay under conditions on the Green function. We show that if the gradient of the Green function is uniformly bounded from below, then the manifold is flat. Furthermore, we prove that for a Ricci-flat manifold with quadratic curvature decay and Euclidean volume growth, the curvature is in $L^p$ for any $p \ge 2$. Combining with Cheeger-Tian \cite{CT} and Kr\"{o}ncke-Szab\'{o} \cite{KS}, we obtain that the manifold must be ALE of optimal order.
\end{abstract}

\maketitle

\section{Introduction}

We consider a complete nonparabolic Riemannian manifold $(M^n,g)$, $n\ge3$, and fix a point $p\in M$. Let $G(p,\cdot)$ denote the minimal positive Green function with pole at $p$. Following Colding \cite{Col}, we introduce the renormalized function
\[
b(x) := G(p,x)^{\frac{1}{2-n}}.
\]
A direct computation shows that $b$ satisfies the identity
\[
\Delta b^2 = 2n |\nabla b|^2.
\]
Since $b$ behaves like the distance function from $p$ (on $\R^n$ one has $b(x)=|x-p|$), it plays an important role in the study of rigidity phenomena and geometric inequalities on manifolds with Ricci curvature bounds. 

In \cite[Theorem 3.1]{Col}, Colding established the following sharp gradient estimate for $b$.

\begin{theorem}[{\cite[Theorem 3.1]{Col}}] \label{thm: Colding} If moreover $\Ric\ge 0$ on $M$, then
\begin{equation} \label{eq: grad est}
|\nabla b| \leq 1,    
\end{equation}
 with equality at one point implying that M is isometric to the flat Euclidean space $\R^n$.
\end{theorem}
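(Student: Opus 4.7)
The plan is to convert the harmonicity of $G$ into a PDE for $b$, apply Bochner's formula together with $\Ric\ge 0$ to obtain a pointwise differential inequality, and then extract the gradient bound via a monotonicity formula along the level sets of $b$, whose normalization will be read off from the Euclidean asymptotics of $G$ at the pole.

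First I would compute, directly from $\Delta G=0$ and $G=b^{2-n}$, the identity
\[
b\,\Delta b \;=\; (n-1)\,|\nabla b|^2,
\]
the analogue of $|x|\,\Delta |x|=n-1$ on $\R^n$. Next I would apply Bochner's formula to $|\nabla b|^2$, invoke the Kato inequality $|\mathrm{Hess}\,b|^2 \ge (\Delta b)^2/n$, and use $\Ric\ge 0$. After substituting $\Delta b = (n-1)|\nabla b|^2/b$ from the previous identity, this should yield a differential inequality for $\phi := |\nabla b|^2$ of the schematic form
\[
\tfrac12\Delta\phi \;-\; \tfrac{n-1}{b}\langle \nabla b,\nabla\phi\rangle \;\ge\; -\,\tfrac{n-1}{n}\,\tfrac{\phi^2}{b^2}.
\]
Note that this alone does \emph{not} yield $\phi\le 1$ via a naive maximum principle, because the right-hand side has the wrong sign at a putative interior maximum of $\phi$.

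To circumvent this, I would integrate the above inequality against a suitable power of $b$ over sublevel sets $\{b\le r\}$ and apply the divergence theorem, choosing the weight so that the bulk $|\nabla b|^4$ contribution gets absorbed into the boundary term after integration by parts. The intended outcome is monotonicity of
\[
A(r) \;:=\; r^{1-n}\int_{\{b=r\}}|\nabla b|^3\,d\sigma
\]
in $r$, with rigidity precisely when the Kato and Bochner inequalities are saturated. Using the local expansion $G(x) = c_n\,d(p,x)^{2-n}(1+o(1))$ near $p$, I would then compute $\lim_{r\to 0}A(r) = \omega_{n-1}$, the value realized on $\R^n$, which fixes the normalization.

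The main obstacle, as I see it, is promoting the integral consequence of monotonicity to the pointwise bound $|\nabla b|\le 1$. I would try either (i) iterating the monotonicity with Green's functions based at points of near-maximal $|\nabla b|$, or (ii) running a Moser-type iteration on the Bochner inequality seeded by the integral bound. Once the pointwise estimate is in hand, rigidity follows because equality $|\nabla b|(x_0)=1$ saturates the Kato inequality and the monotonicity, forcing $\mathrm{Hess}\,b = (\Delta b/n)\,g$ and $\Ric(\nabla b,\nabla b)\equiv 0$ along the gradient flow through $x_0$. Combined with the smooth behavior of $b$ at the pole $p$, these identities force the level sets of $b$ to be totally umbilical round spheres and $M$ to have the warped-product structure of flat $\R^n$.
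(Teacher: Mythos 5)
This statement is not proved in the paper at all --- it is quoted from Colding \cite[Theorem 3.1]{Col} --- so the relevant comparison is with Colding's argument, whose ingredients (the identity $b\,\Delta b=(n-1)|\nabla b|^2$, the Bochner formula, $\Ric\ge 0$, the pole asymptotics $|\nabla b|\to 1$) you have correctly assembled. The genuine gap is exactly the one you flag, and the key missing idea is this: one must \emph{not} apply the naive Kato inequality $|\mathrm{Hess}\,b|^2\ge(\Delta b)^2/n$, because the discarded mixed Hessian terms are precisely what rescue the maximum principle. Decomposing the Hessian along $\nu=\nabla b/|\nabla b|$ and the level-set directions, and using $\mathrm{Hess}\,b(\nabla b,\cdot)=\frac12\nabla\phi$ and $\mathrm{Hess}\,b(\nu,\nu)=\langle\nabla\phi,\nabla b\rangle/(2\phi)$, the square $\left(\Delta b-\mathrm{Hess}\,b(\nu,\nu)\right)^2/(n-1)$ contributes a cross term $-\frac1b\langle\nabla\phi,\nabla b\rangle$ that exactly absorbs your ``wrong-sign'' term $-\frac{n-1}{n}\phi^2/b^2$; what survives is
\[
\frac12\Delta\phi-\frac{n-2}{b}\langle\nabla b,\nabla\phi\rangle\ \ge\ 0,
\qquad\text{equivalently}\qquad
\mathrm{div}\left(G^2\,\nabla|\nabla b|^2\right)\ \ge\ 0,
\]
so $\phi=|\nabla b|^2$ is a subsolution of the drift operator $\mathcal{L}=\mathrm{div}(G^2\nabla\cdot)$. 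The sharp bound is then a maximum principle statement, not an integral-to-pointwise upgrade: $\phi\to1$ at the pole; $\phi$ is a priori bounded (Cheng--Yau applied to $G$, together with the Li--Yau/Bishop--Gromov bound $G\ge c(n)d^{2-n}$); and $b^{n-2}=G^{-1}$ is $\mathcal{L}$-harmonic and blows up at infinity, so $\phi-\epsilon\, b^{n-2}$ serves as a barrier on exhausting regions $\{r\le b\le R\}$. Equality at an interior point forces $\phi\equiv1$ by the \emph{strong} maximum principle, hence $\mathrm{Hess}\,b^2=2g$ globally, so $M$ is a metric cone that is smooth at the pole, i.e.\ $\R^n$.

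Neither of your proposed repairs can close the gap. Moser iteration produces bounds of the form $\sup\phi\le C(n)\fint\phi$ with $C(n)>1$, so it can never yield the borderline constant $1$, let alone the rigidity; and re-basing the Green function at a near-maximum point changes $G$ and $b$ simultaneously, with no identity relating $|\nabla b_p|(x_0)$ to the monotone quantity built from $G(x_0,\cdot)$. The level-set monotonicity you aim for is genuinely weaker than the pointwise bound ($|\nabla b|>1$ on a set of small measure in each level set is perfectly consistent with it), so some form of the maximum principle is unavoidable --- and it only becomes available after the refined Hessian computation above. Finally, in your rigidity sketch, equality at one point propagates to $\phi\equiv1$ on all of $M\setminus\{p\}$ by the strong maximum principle; identities holding merely ``along the gradient flow through $x_0$'' would not suffice to conclude flatness of $M$.
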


A natural question is whether one can obtain rigidity under a complementary lower bound on $|\nabla b|$. In this paper we address this question under additional structural assumptions on $(M,g)$. Our first result shows that, for Ricci-flat manifolds with quadratic curvature decay, a uniform positive lower bound for $|\nabla b|$ forces flatness. Throughout we write $d(x,y)$ for the Riemannian distance between $x,y\in M$.

\begin{theorem}\label{thm: rigidity}
For each $\delta\in (0,1)$ and dimension $n\ge 3$, there exists a positive constant $C(n, \delta)$ with the following significance.

Let $(M, g)$ be an $n$-dimensional nonparabolic Ricci-flat Riemannian manifold and $p\in M$ a fixed point. Suppose that  
$$
|Rm|(x) \le \frac{C(n,\delta)}{d(p,x)^2}
\quad\text{and}\quad
|\nabla b|(x) \ge \delta
$$
for all $x \in M$. Then $M$ is flat.
\end{theorem}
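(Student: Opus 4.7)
I would exploit Colding's sharp monotonicity formula applied to $b$, together with the flux identity for the harmonic function $b^{2-n}$, and use the curvature decay hypothesis to force the pinching $|\nabla b|\to 1$ at infinity. Concretely, consider the normalized surface integral
\[
\mathcal{V}(r):=r^{1-n}\int_{\{b=r\}}|\nabla b|^3\,d\sigma.
\]
Under $\Ric\ge 0$, Colding showed that $\mathcal{V}$ is nonincreasing on $(0,\infty)$ with $\mathcal{V}(0^+)=\omega_{n-1}$, and $\mathcal{V}'(r)=0$ only when the trace-free Hessian of $b^2$ vanishes, which in turn forces $(M,g)\cong\R^n$. On the other hand, since $b^{2-n}$ is harmonic, the divergence theorem yields the constant flux
\[
r^{1-n}\int_{\{b=r\}}|\nabla b|\,d\sigma\equiv\omega_{n-1}\quad\text{for all } r>0.
\]
Thus, if one can establish $\sup_{\{b=r\}}\bigl(1-|\nabla b|^2\bigr)\to 0$ as $r\to\infty$, then
\[
\mathcal{V}(r)-\omega_{n-1}=r^{1-n}\!\int_{\{b=r\}}|\nabla b|\bigl(|\nabla b|^2-1\bigr)\,d\sigma\longrightarrow 0,
\]
and combined with the monotonicity and initial value $\mathcal{V}(0^+)=\omega_{n-1}$ this forces $\mathcal{V}\equiv\omega_{n-1}$, whence $M$ is flat by the equality case.

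To prove the pinching, I would analyze $\phi:=1-|\nabla b|^2\in[0,1-\delta^2]$ via the Ricci-flat Bochner identity
\[
\tfrac12\Delta|\nabla b|^2=|\nabla^2 b|^2+\langle\nabla b,\nabla\Delta b\rangle,\qquad \Delta b=\frac{(n-1)|\nabla b|^2}{b},
\]
combined with a refined Kato inequality to extract a drift-elliptic inequality for $\phi$ against $\Delta-\tfrac{2(n-1)}{b}\nabla b\cdot\nabla$. On each dyadic annulus $\{r\le b\le 2r\}$, rescaling by $r$ produces the curvature bound $|Rm|_{\tilde g}\le C(n,\delta)$ in $\tilde g=r^{-2}g$, while $|\nabla b|\ge\delta$ ensures the level sets are uniformly regular hypersurfaces of bounded geometry. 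A Moser iteration (or a three-annulus Harnack argument) for $\phi$ on such annuli should then yield decay $\sup_{\{b=r\}}\phi\to 0$, provided $C(n,\delta)$ is taken small enough relative to $\delta$ to absorb the error terms from the Kato defect, from the $1/b$ drift in the Bochner identity, and from passing between $b$ and $d(p,\cdot)$ (the latter comparison being controlled by Colding's upper bound $b\le d(p,\cdot)$ together with the volume bound $\mathrm{Vol}(\{b\le r\})\le \delta^{-2}\omega_n r^n$ that follows from the flux identity and $|\nabla b|\ge\delta$).

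\smallskip
\noindent\textbf{Main obstacle.} The crux is the pinching step. Although Ricci-flatness removes the explicit curvature term in the Bochner formula for $|\nabla b|^2$, the Riemann tensor reappears through commutators $[\nabla,\nabla^2]b$ in the higher derivatives needed to run the iteration, as well as through Jacobi-field comparison when aligning $b$ with the model radial function at infinity. Tracking how small $C(n,\delta)$ must be chosen (in particular preventing degeneration as $\delta\to 0$) is the delicate calibration; I expect it to require cutoffs adapted to $b$ rather than $d(p,\cdot)$, and careful use of the Sobolev inequality on dyadic $b$-annuli furnished by the Euclidean-type volume bounds just described.
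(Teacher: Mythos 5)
Your reduction framework is sound as far as it goes: the flux identity for $b^{2-n}$, the monotonicity of $\mathcal{V}(r)=r^{1-n}\int_{\{b=r\}}|\nabla b|^3$, and its rigidity case would indeed yield flatness \emph{once} one knows $\sup_{\{b=r\}}(1-|\nabla b|^2)\to 0$ as $r\to\infty$. But that pinching statement is not an auxiliary estimate — it is equivalent to the theorem itself (it fails on every non-flat example, e.g.\ on Eguchi--Hanson where $|\nabla b|\to\theta<1$), and the mechanism you propose cannot produce it. A Moser iteration or three-annulus argument uses only the hypotheses on dyadic annuli $\{r\le b\le 2r\}$ far from the pole, and those hypotheses are satisfied, with the best possible constants, on the punctured flat cone $(\R^n/\Gamma)\setminus\{0\}$: there $Rm\equiv 0$, the level sets of $b$ are quotient spheres of uniformly bounded geometry, $|\nabla b|\equiv|\Gamma|^{-1/(n-2)}\ge\delta$, and yet $\phi=1-|\nabla b|^2$ is a positive constant. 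Hence no estimate that is local at infinity can force $\phi\to 0$, no matter how small $C(n,\delta)$ is chosen; the only feature ruling out cone behavior is smoothness at the pole (where $|\nabla b|\to 1$), and your sketch contains no mechanism propagating that information outward. Relatedly, the differential inequality you want for $\phi$ does not exist with a usable sign: in the Ricci-flat case the Bochner identity gives
\[
\tfrac12\Delta\phi-\tfrac{n-1}{b}\langle\nabla b,\nabla\phi\rangle=\frac{(n-1)|\nabla b|^4}{b^2}-|\nabla^2 b|^2,
\]
whose right-hand side has indeterminate sign and size of order $1/b^2$ \emph{independent of the curvature constant} (it vanishes exactly in the conical case). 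So $\phi$ is neither a sub- nor a supersolution up to errors absorbable by shrinking $C(n,\delta)$, and since constants solve the homogeneous drift equation, no Harnack or three-annulus scheme can rule them out.

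By contrast, the paper never proves (and does not need) any pinching of $|\nabla b|$. It applies the identity $\big(r^{1-n}\int_{\{b=r\}}f|\nabla b|\big)'=r^{1-n}\int_{\{b\le r\}}\Delta f$ to $f=b^2|Rm|^2$, uses $\Delta|Rm|^2\ge -C(n)|Rm|^3$ on Ricci-flat manifolds, and invokes the hypothesis $|\nabla b|\ge\delta$ only to absorb the cubic term via $b^2|Rm|^3\le \frac{C_0}{\delta^2}\,|Rm|^2|\nabla b|^2$. This yields a second-order ODE inequality for $F(r)=\int_{\{b\le r\}}|Rm|^2|\nabla b|^2$; the boundary condition as $r\to 0$ — this is precisely where smoothness at the pole enters globally — gives the growth comparison $F(s)\le s^x F(r)/r^x$ with indicial root $x>n-4$ for $C_0$ small, while Bishop--Gromov gives $F(r)\le Cr^{n-4}$, so letting $r\to\infty$ forces $F\equiv 0$ and hence $Rm\equiv 0$. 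If you wish to salvage your route, you would need an integrated quantity tying the Hessian deficit (equivalently, the variation of $\phi$) to curvature all the way from the pole to infinity — at which point you would essentially be re-deriving the paper's monotonicity argument with $|Rm|^2$ in place of $1-|\nabla b|^2$.
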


\begin{remark}
The constant $C(n,\delta)$ can be explicitly computed, as our proofs do not rely on any compactness argument.
\end{remark}

When $M$ has Euclidean volume growth, i.e. when $v_M:=\lim_{r\to \infty} \frac{\mathrm{vol}(B_p(r))}{\omega_n r^n}>0$
where $\omega_n$ denotes the volume of the unit $n$-ball in $\R^n$, we obtain the following rigidity result by combining with works of Colding-Minicozzi \cite{CM, CM2}.

\begin{theorem} \label{thm: rigidity2}

For each dimension $n\ge 3$, there exists a positive constant $C(n)$ with the following significance.

Let $(M, g)$ be an $n$-dimensional nonparabolic Ricci-flat Riemannian manifold with Euclidean volume growth, and $p\in M$ a fixed point. Suppose that 
$$|Rm|(x) \le \frac{C(n)}{d(p,x)^2}$$  for all $x \in M$. Then $\int_M |Rm|^p < \infty$ for any $p \ge 2$.
\end{theorem}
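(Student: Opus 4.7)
The strategy is to combine the rigidity mechanism of Theorem~\ref{thm: rigidity} with Colding--Minicozzi's \cite{CM, CM2} asymptotic analysis of the Green function on Ricci-flat manifolds with Euclidean volume growth. Although the uniform lower bound $|\nabla b|\ge\delta$ required by Theorem~\ref{thm: rigidity} is not assumed here, the additional Euclidean volume growth hypothesis will furnish such a bound at infinity, allowing the rigidity to be applied to annuli at large scale via a blow-down argument.

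The first step is to invoke the monotonicity formulas of Colding and Colding--Minicozzi---notably $V(r)=r^{-n}\int_{\{b\le r\}}|\nabla b|^4$ and the boundary flux $A(r)=r^{1-n}\int_{\{b=r\}}|\nabla b|^3$, both nondecreasing in $r$ and convergent at infinity to positive constants determined by $n$ and $v_M$. Combined with the sharp upper bound $|\nabla b|\le 1$ of Theorem~\ref{thm: Colding}, these yield convergence of $|\nabla b|$ to a positive constant in an averaged sense on level sets of $b$. A pointwise version then follows from the Bochner identity for $|\nabla b|^2$ together with Shi-type derivative estimates, which are available thanks to the quadratic curvature decay. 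Consequently, for some $\delta=\delta(n,v_M)>0$ and $R_0=R_0(n,v_M)$ one has $|\nabla b|\ge\delta$ on $M\setminus B_p(R_0)$.

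Next comes a scaling argument. For $r\ge R_0$, rescale the metric by $r^{-2}$ and examine the annulus $B_p(2r)\setminus B_p(r)$; both the quadratic curvature bound and the pointwise bounds $\delta\le|\nabla b|\le 1$ are scale-invariant, so after rescaling the annulus has uniformly bounded curvature at unit scale. By Cheeger--Gromov compactness, together with uniqueness of the tangent cone at infinity (itself a consequence of \cite{CM, CM2} on Ricci-flat manifolds of Euclidean volume growth), any subsequential limit of these rescaled annuli sits inside the tangent cone and satisfies the full hypotheses of Theorem~\ref{thm: rigidity}. Hence the limit is flat, giving $\sup_{d(p,x)\sim r}|Rm|(x)=o(r^{-2})$. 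Tracking constants effectively through the proof of Theorem~\ref{thm: rigidity}---or iterating the improvement dyadically---should upgrade this to a polynomial rate $|Rm|(x)\le C\,d(p,x)^{-2-\alpha}$ for some $\alpha=\alpha(n)>0$. A dyadic sum over annuli combined with Euclidean volume growth then gives $\int_{M\setminus B_p(R_0)}|Rm|^p<\infty$ for every $p\ge 2$, and boundedness of $|Rm|$ on the compact set $B_p(R_0)$ closes the estimate.

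The step I expect to be the main obstacle is the quantitative upgrade in the rescaling argument: passing from qualitative flatness of the Cheeger--Gromov limits to a definite polynomial rate over the base scale. While the limit is flat by direct application of Theorem~\ref{thm: rigidity}, obtaining a uniform $|Rm|\le C\,d(p,x)^{-2-\alpha}$ rather than merely $o(d(p,x)^{-2})$ typically requires either an $\varepsilon$-regularity statement at each scale or a careful effective version of the proof of Theorem~\ref{thm: rigidity}. A secondary technical point is to localize Theorem~\ref{thm: rigidity} so that its hypotheses need only be verified on an annular region, since the Cheeger--Gromov limit only describes a portion of the tangent cone rather than a complete manifold.
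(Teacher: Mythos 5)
Your opening step coincides with the paper's: the Euclidean volume growth hypothesis is used to produce a uniform lower bound on $|\nabla b|$ outside a compact set (the paper simply cites \cite[Section 2.3]{CM2} for the existence of $\lim_{b\to\infty}|\nabla b|=\theta\in(0,1)$, so no Bochner/Shi argument is needed). From there, however, your blow-down strategy has two gaps, and I believe the second is fatal. First, Theorem \ref{thm: rigidity} cannot be applied to the rescaled limits: those limits are annuli in the tangent cone (equivalently, portions of the cone minus its vertex), not complete nonparabolic manifolds, so its hypotheses fail. This is not a secondary technical point: the proof of Theorem \ref{thm: rigidity} is a global monotonicity argument for $F(r)=\int_{\{b\le r\}}|Rm|^2|\nabla b|^2$ whose two essential inputs are the boundary condition $(rF'(r)-xF(r))/r^{y}\to 0$ as $r\to 0$ and the Bishop--Gromov growth bound $F(r)\le Cr^{n-4}$ as $r\to\infty$; neither is available on an annulus, so the ``localized version'' you defer does not exist as stated and would be a substantial new result.

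Second, and decisively, the endgame is quantitatively wrong. Even granting flatness of all blow-down limits, hence $|Rm|=o(d^{-2})$, and even granting the (unjustified) upgrade to $|Rm|\le C\,d^{-2-\alpha}$ for some small $\alpha(n)>0$, the dyadic sum you propose gives $\int_{\{d\sim 2^k\}}|Rm|^2\approx 2^{k(n-4-2\alpha)}$, which is summable only when $\alpha>(n-4)/2$. So for $n\ge 5$ a small polynomial gain does not yield $\int_M|Rm|^2<\infty$; one would need decay of order roughly $n/2$, which is essentially the ALE conclusion of \cite{BKN,CT} --- and those results take $\int_M|Rm|^{n/2}<\infty$ as input, i.e.\ exactly what is to be proved, so that route is circular. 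The paper avoids pointwise improvement altogether: it runs the monotonicity argument directly on $\{b\ge r_0\}\subset M$ applied to the regularization $(|Rm|^2+\delta^2)^{(1-\epsilon)/2}$, where $\epsilon=\epsilon(n)$ is the exponent in the improved Kato inequality $\Delta|Rm|^{1-\epsilon}\ge -C|Rm|^{2-\epsilon}$ of \cite[Corollary 4.10]{BKN}; the missing $r\to 0$ boundary condition is replaced by an explicit bound on the derivative at $r_0$ coming from \eqref{eq: int identity}, and the ODE is integrated from $r_0$ to $\delta^{-1/2}$ before letting $\delta\to 0$. This yields the level-set decay $\int_{\{b=r\}}|Rm|^{1-\epsilon}|\nabla b|\le Cr^{y}$ with $y$ close to $1$, which is far stronger than anything a pointwise $d^{-2-\alpha}$ bound can give (the level sets have area $\sim r^{n-1}$); combined with $|Rm|\le C_0/b^2$ and the coarea formula, it gives $\int_{\{b\ge r_0\}}|Rm|^2|\nabla b|^2<\infty$ and hence $\int_M|Rm|^p<\infty$ for all $p\ge2$. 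That passage from pointwise quadratic decay to strong \emph{integral} decay on level sets, via the Kato exponent $1-\epsilon$, is the idea your proposal is missing.
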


Combining Theorem \ref{thm: rigidity2} with the results of Bando–Kasue–Nakajima \cite{BKN}, Cheeger–Tian \cite{CT}, and Kr\"{o}ncke–Szab\'{o} \cite{KS}, we immediately obtain the following corollary.

\begin{corollary} \label{cor: rigidity2}
Under the same assumptions as in Theorem \ref{thm: rigidity2}, $M \setminus K$ is diffeomorphic to $\left(\R^n \backslash B_0(R)\right)/\Gamma$ for some compact set $K \subset M$, where $B_0(R)$ is the ball of radius $R$ centered at $0$ in $\R^n$. Moreover, $M$ is an ALE manifold of order $n$.
\end{corollary}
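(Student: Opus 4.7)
The plan is to derive the corollary by assembling Theorem \ref{thm: rigidity2} with three results already in the literature, so essentially all the heavy lifting has been done. First, the case $n=3$ is immediate: in three dimensions the full Riemann tensor is determined by Ricci, so Ricci-flatness already forces flatness, and $M$ is Euclidean. Henceforth assume $n\ge 4$, which gives $n/2\ge 2$. Theorem \ref{thm: rigidity2} then yields $|Rm|\in L^p(M)$ for every $p\ge 2$, and in particular $|Rm|\in L^{n/2}(M)$.

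Next, I would invoke the Bando--Kasue--Nakajima structure theorem \cite{BKN}: a Ricci-flat Riemannian manifold with Euclidean volume growth and $L^{n/2}$-integrable curvature is ALE. Applied to our $(M,g)$, this produces a compact set $K\subset M$, a finite subgroup $\Gamma<O(n)$, and $R>0$ such that $M\setminus K$ is diffeomorphic to $(\R^n\setminus B_0(R))/\Gamma$, together with an ALE expansion of some positive order $\tau$. The quadratic curvature decay $|Rm|\le C(n)/d(p,\cdot)^2$ from the hypothesis, combined with Ricci-flatness and the Euclidean volume growth, ensures the regularity and non-collapsing hypotheses of \cite{BKN} are met.

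To upgrade $\tau$ to the optimal value $n$, I would apply Cheeger--Tian \cite{CT} and Kr\"oncke--Szab\'o \cite{KS} in sequence: the former improves the ALE decay rate using the higher integrability $|Rm|\in L^p$ for large $p$ that Theorem \ref{thm: rigidity2} supplies, while the latter, specific to the Ricci-flat setting, pushes the order all the way to $n$. The main obstacle, such as it is, is not substantive: the corollary is a formal consequence once the full $L^p$-integrability for $p\ge 2$ is in hand. The only care required is to verify that the precise regularity and integrability assumptions of each cited theorem are indeed satisfied in our setting, which follows directly from Ricci-flatness, the quadratic curvature decay, Euclidean volume growth, and Theorem \ref{thm: rigidity2}.
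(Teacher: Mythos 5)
Your proposal coincides with the paper's own (essentially unwritten) proof: the corollary is presented as an immediate consequence of Theorem \ref{thm: rigidity2} combined with \cite{BKN}, \cite{CT} and \cite{KS}, which is exactly your assembly --- $L^{n/2}$ curvature from Theorem \ref{thm: rigidity2} (valid since $n/2\ge 2$ when $n\ge 4$), the ALE structure and diffeomorphism type from \cite{BKN}, and the upgrade to the optimal order $n$ via \cite{CT} and \cite{KS}. Your separate treatment of $n=3$, where $n/2<2$ is not covered by the theorem but Ricci-flatness already forces flatness, is a point of care that the paper glosses over.
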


The key ingredients in the proof of Theorem \ref{thm: rigidity} and Theorem \ref{thm: rigidity2} are monotonicity formulas tailored to Ricci-flat manifolds, given in \eqref{eqn:monotonicity1} and \eqref{eqn:monotonicity2}. Our methods are inspired by similar techniques used by Lee-Wang \cite{LW} in their study of Poincar\'{e}-Einstein manifolds with the flat Euclidean space as their conformal infinity.

Under the assumption of Euclidean volume growth,  it is known that the pointwise decay $\lim_{d(p,x)\to \infty}|Rm|(x) = 0$ already implies a quadratic curvature decay $|Rm|\le C/d^2$ for some $C>0$; see Cheng--Zhu \cite[Theorem 1.3]{CZ}. On the other hand, the lower bound assumption on $|\nabla b|$ in Theorem \ref{thm: rigidity} essentially cannot be omitted. The Eguchi-Hanson metric provides a complete non-flat Ricci-flat manifold with Euclidean volume growth and quadratic curvature decay. There is now an abundance of ALE and AC spaces, constructed for instance in \cite{Kro1, Kro2, St, TY}, which suggests that additional conditions beyond quadratic curvature decay are necessary to conclude flatness.

We conclude this introduction by mentioning that rigidity theorems for complete Einstein manifolds have been extensively studied. Regarding $L^{n/2}$ curvature bounds, Anderson \cite[Theorem 3.5]{And} proved that if $M$ is a complete Ricci-flat $n$-manifold of Euclidean volume growth and
\[
\int_M |Rm|^{n/2} < \infty,
\]
then $M^n \setminus B_p(R_0)$ is diffeomorphic to $(R_0,\infty) \times \mathbb{S}^{n-1}/\Gamma$ for some large $R_0>0$, where $\mathbb{S}^{n-1}/\Gamma$ is a space form. Shen \cite{Shen} showed that there is a constant $c(n)$ so that if moreover $$\int_M |Rm|^{n/2} \le c(n) v_M^{n+1},$$ then $M$ is isometric to $\R^n$. As for gap theorems on the volume, Anderson \cite{And2} showed that there exists $\epsilon(n)>0$ for which if the Ricci-flat manifold $M$ has $v_M > 1-\epsilon(n)$, then $M$  is isometric to $\R^n$. Honda-Mondino \cite{HM} confirmed that the optimal $\epsilon(4)$ is $1/2$; we refer the reader to \cite{HM} for the proof and several interesting open questions. We also remark that Yokota \cite{Yokota} proved an analogous gap theorem on gradient Ricci solitons for the Gaussian density. Our results are complementary to these works, focusing on bounds on the Green function.

This paper is organized as follows. In Section \ref{sec: ODE} we prove a model ODE lemma which captures the mechanism behind our main theorems. In Section \ref{sec: proof} we apply the same ideas to the curvature tensor on Ricci-flat manifolds and prove Theorems \ref{thm: rigidity} and \ref{thm: rigidity2}.
\\
\\
\noindent {\bf Acknowledgement.} JP was supported by the National Research Foundation of Korea (NRF) Grant RS-2024-00346651.

\section{An ODE lemma} \label{sec: ODE}

The following lemma on the rigidity of quadratically decaying solutions of a linear elliptic equation serves as a model problem for our main results.

\begin{lemma} \label{lem: ODE}
Let $n \ge 3$ and $f:\R^n \to \R$ be a smooth function with quadratic decay, so that $\underset{B_0(r)}{\sup}|f|\le\frac{C_1}{r^2}$ for any $r>0$ where $C_1\le (n-2)^2/8$ is a positive constant. Let $u:\R^n \to \R$ be a smooth solution to the equation \[\Delta u = - fu.\]
Then $g(r)$ defined by $\textstyle g(r) =  \int_0^r s^{n-3} \fint_{\partial B_s} u^2 \, ds$ enjoys the growth estimate
\begin{equation}
g(s) \le \frac{s^xg(r)}{r^x}
\end{equation}
for any $r\ge s>0$, where $x=\left(n-2+\sqrt{(n-2)^2-8C_1}\right)/2$.
Moreover, if $n<10$ or $C_1<2n-12$, then any quadratically decaying solution $u$ so that $\underset{B_0(r)}{\sup}|u|\le\frac{C_0}{r^2}$ for some constant $C_0>0$ must be identically zero.
\end{lemma}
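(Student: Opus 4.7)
The plan is to show that $g$ satisfies a second-order Euler-type differential inequality and then upgrade this to a first-order monotonicity via a Vieta identity, after which the rigidity claim follows by comparing growth rates on the two sides.

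First I would rewrite $g$ by coarea as
\[
g(r) = \frac{1}{n\omega_n}\int_{B_0(r)} \frac{u^2}{|x|^2}\,dV,
\]
so that $n\omega_n\, g'(r) = H(r)/r^2$, where $H(r) = \int_{\partial B_0(r)} u^2$. Writing $D(r) = \int_{B_0(r)} |\nabla u|^2$ and $E(r) = \int_{B_0(r)} f u^2$, Green's identity applied to $\Delta u = -fu$ gives $\int_{\partial B_0(r)} u\,\partial_r u = D(r) - E(r)$, and differentiating the coarea representation $H(r) = r^{n-1}\int_{S^{n-1}} u^2(r\omega)\,d\omega$ gives $H'(r) = \tfrac{n-1}{r}H(r) + 2\int_{\partial B_0(r)} u\,\partial_r u$. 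Combining these and using $|f|\le C_1/|x|^2$, which forces $|E(r)| \le n\omega_n C_1\, g(r)$, one arrives at the key differential inequality
\[
g''(r) - \frac{n-3}{r}\, g'(r) + \frac{2C_1}{r^2}\, g(r) \ge 0.
\]
The indicial polynomial $\alpha^2 - (n-2)\alpha + 2C_1$ of the Euler operator on the left has larger root exactly equal to $x$.

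Next I would upgrade this inequality to first-order monotonicity. Setting $J(r) := rg'(r) - xg(r)$, so that $(g/r^x)' = J/r^{x+1}$, a direct computation using the differential inequality above together with Vieta's identity $x(n-2-x) = 2C_1$ yields $J'(r) \ge \tfrac{n-2-x}{r}\, J(r)$, meaning $r^{-(n-2-x)}J(r)$ is non-decreasing. A Taylor expansion at the origin shows $g(r) = u(0)^2\, r^{n-2}/(n-2) + O(r^n)$, hence $r^{-(n-2-x)}J(r) \to 0$ as $r\to 0^+$ (using $x>0$, which always holds since $n\ge 3$). Consequently $J\ge 0$ on $(0,\infty)$, which gives the claimed monotonicity $g(s)/s^x \le g(r)/r^x$.

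For the rigidity part, if $u\not\equiv 0$ then $g(r_0)>0$ for some $r_0$, and the monotonicity forces $g(r)\ge c\, r^x$ for $r\ge r_0$. On the other hand, plugging the quadratic decay $|u|\le C_0/|x|^2$ directly into the weighted integral yields an upper bound of the form $g(r) \le C_3 + C_4\max\{1,\,\log r,\,r^{n-6}\}$, with the three cases corresponding to $n<6$, $n=6$, and $n>6$ respectively. Elementary algebra shows that the condition for $r^x$ to outpace this upper bound is exactly $x>\max(0,n-6)$, which translates into the hypothesis $n<10$ or $C_1<2n-12$. Under that hypothesis the two bounds are incompatible unless $g\equiv 0$, hence $u\equiv 0$ by smoothness. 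The most delicate point in the argument is the factor $2$ in front of $C_1$ in the key differential inequality, since this is precisely what produces the discriminant $(n-2)^2-8C_1$ and hence the sharp threshold $2n-12$; any slack here would give a suboptimal range of parameters.
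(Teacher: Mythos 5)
Your proposal is correct and takes essentially the same route as the paper: both derive the Euler-type inequality $g''(r) - \frac{n-3}{r}g'(r) + \frac{2C_1}{r^2}g(r) \ge 0$ by integrating by parts and discarding the nonnegative Dirichlet energy, and your monotone quantity $r^{-(n-2-x)}J(r) = (rg'(r)-xg(r))/r^{y}$ is literally the paper's $h'(r)/r^{y-x-1}$ with $h = g/r^x$, with the same vanishing at the origin and the same growth comparison $r^x$ versus $r^{n-6}$ (or $\log r$) at infinity. The only difference is presentational: you spell out the algebra converting $x > \max(0, n-6)$ into the hypothesis $n < 10$ or $C_1 < 2n-12$, which the paper leaves implicit.
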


\begin{proof}
Let $B_r$ denote a ball of radius $r$ centered at a fixed point. Integration by parts yields
\begin{align}\label{eq: ODE ineq}
\begin{split}
0\le\frac{1}{r^{n-1}} \int_{B_r} |Du|^2&=-\frac{1}{r^{n-1}}\int_{B_r}u\Delta u+\frac{1}{r^{n-1}}\int_{\partial B_r}u\partial_r u\\
&=\frac{1}{r^{n-1}} \int_{B_r} fu^2+\frac{1}{r^{n-1}}\int_{\partial B_r}u\partial_r u.
\end{split}
\end{align}

The first term can be estimated by
\begin{align*}
\int_{B_r} fu^2&\le\int_0^r \underset{\partial B_s}{\sup}|f|\cdot|\partial B_s|\fint_{\partial B_s}u^2 \, ds=(n+1)\omega_{n+1}\int_0^r \underset{\partial B_s}{\sup}|f|\cdot s^{n-1}\fint_{\partial B_s}u^2 \, ds\\
&\le (n+1)\omega_{n+1}C_1 \int_0^r s^{n-3}\fint_{\partial B_s}u^2 \, ds,
\end{align*}
where we used that $|\partial B_s|=(n+1)\omega_{n+1}s^{n-1}$, $\omega_{n+1}$ denoting the volume of the unit $(n+1)$-ball. Denote $g(r)$ by
\[g(r) =  \int_0^r s^{n-3} \fint_{\partial B_s} u^2 \, ds.\]
Then $\displaystyle \int_{B_r} fu^2\le (n+1)\omega_{n+1}C_1 g(r)$, and \[g'(r)=r^{n-3}\fint_{\partial B_r} u^2.\] Differentiating gives \[\displaystyle g''(r) = \frac{n-3}{r}g'(r)+2r^{n-3}\fint_{\partial B_r}u \partial_r u.\]
We may now rewrite \eqref{eq: ODE ineq} to be
\begin{align*}
\begin{split}
    0&\le\frac{1}{r^{n-1}}\int_{B_r}fu^2 +\frac{|\partial B_r|}{r^{n-1}}\fint_{\partial B_r}u\partial_r u\\
&\le \frac{(n+1)\omega_{n+1}C_1}{r^{n-1}} g(r)+\frac{(n+1)\omega_{n+1}}{2r^{n-3}}\left(g''(r)-\frac{n-3}{r}g'(r)\right).
\end{split}
\end{align*}
which is equivalent to the following inequality,
\begin{equation} \label{eq: ODE simplified}
\frac{(n-3)}{r} g'(r) \le g''(r) + \frac{2C_1}{r^2} g(r).
\end{equation}
Let $x\ge y$ be the indicial roots of the above homogeneous ODE, i.e. they solve the quadratic equation \[t^2 -(n-2)t + 2C_1 = 0.\]
Such $x, y$ exist if the inequality $(n-2)^2 \ge8C_1$ holds. They are both positive and less than $(n-2)$. 
Letting $h(r) =g(r)/r^x$, \eqref{eq: ODE simplified} is equivalent to
\begin{equation} \label{eqn:monotonicity1}
0 \le \left(\frac{h'(r)}{r^{y-x-1}}\right)'.
\end{equation}
The definition of $g(r)$ gives that $g(r)=O(r^{n-2})$ and $g'(r)=O(r^{n-3})$ for $r\ll 1$. Therefore 
\[\frac{h'(r)}{r^{y-x-1}}=\frac{rg'(r)-xg(r)}{r^{y}} \to 0 \text{ as }r\to 0.\]
Thus $\frac{h'(r)}{r^{y-x-1}} \ge 0$ and $h'(r) \ge 0$ for all $r\ge 0$. This implies for $r \ge s >0,$
$$g(s) \le \frac{s^xg(r)}{r^x}.$$
Now if $u$ also decays quadratically by $\underset{B_0(r)}{\sup}|u| \le C_0/r^2$, then $g(r)=O(r^{n-6})$ as $r\to \infty$ for $n \neq 6$, and $g(r)=O(\log r)$ as $r\to \infty$ for $n=6$. Thus, we can conclude that $g\equiv 0$ whenever $x=\frac{n-2}{2}+\sqrt{\left(\frac{n-2}{2}\right)^2-2C_1}>n-6$, which in turn implies that $u \equiv 0$.
\end{proof}

\section{Application to Ricci-flat manifolds} \label{sec: proof}

Let $(M^n,g)$ be a complete noncompact nonparabolic Riemannian manifold of dimension $n\ge 3$ with $\Ric\equiv 0$ and quadratic curvature decay, i.e. there exists a constant $C_0>0$ such that
\[
|Rm| \le \frac{C_0}{d^2},
\]
where $d$ denotes the distance from a fixed point $p \in M$.

Using the same notation as in the Introduction, let $G$ be the minimal positive Green function on $M$, and denote by $b:M \to \R$ the function
\[
b(x)=G(p,x)^{1/(2-n)}.
\]
Then the harmonicity of $G$ is equivalent to
\[\Delta b^2 = 2n|\nabla b|^2.\]

Arguing along the same lines as in Lemma \ref{lem: ODE}, we now prove our main results.

\begin{proof}[Proof of Theorem \ref{thm: rigidity}]
We recall the integral identity
\begin{equation} \label{eq: int identity}
\big(r^{1-n} \int_{\{b=r\}} f |\nabla b|\big)' = r^{1-n} \int_{\{b\le r\}} \Delta f    
\end{equation}
which can be found, for instance, in \cite{Col} and \cite{CM}. Define $f:M \to \R$ by
\[f=b^2 |Rm|^2.\]
Applying the above identity to $f$, we obtain
{\allowdisplaybreaks
\begin{align} \label{eqn: deriv ineq}
\begin{split}
\big(&r^{3-n} \int_{\{b=r\}} |Rm|^2 |\nabla b|\big)'=\big(r^{1-n}\int_{\{b=r\}} b^2|Rm|^2 |\nabla b|\big)'=r^{1-n} \int_{\{b\le r\}} \Delta(b^2|Rm|^2)\\
& =  r^{1-n} \int_{\{b\le r\}} \big[ b^2 \Delta|Rm|^2  + 2\nabla b^2 \cdot \nabla |Rm|^2 + |Rm|^2 \Delta b^2 \big] \\
&= r^{1-n} \int_{\{b\le r\}} \big[ b^2 \Delta|Rm|^2 - |Rm|^2 \Delta b^2 \big]+2r^{1-n}\int_{\{b\le r\}}\big[|Rm|^2\Delta b^2 + \nabla b^2 \cdot \nabla|Rm|^2\big]  \\
&=r^{1-n} \int_{\{b\le r\}} \big[ b^2 \Delta|Rm|^2 - |Rm|^2 \Delta b^2 \big]+2r^{1-n}\int_{\{b=r\}}|Rm|^2 \frac{\nabla b^2 \cdot \nabla b}{|\nabla b|} \\
& = r^{1-n} \int_{\{b\le r\}} \big[ b^2 \Delta|Rm|^2 - |Rm|^2 \Delta b^2 \big] +4 r^{2-n} \int_{\{ b =r \}} |Rm|^2|\nabla b| \\
& =  r^{1-n} \int_{\{b\le r\}} \big[ b^2 \Delta|Rm|^2 - 2n |Rm|^2 |\nabla b|^2 \big] +4 r^{2-n} \int_{\{ b =r \}} |Rm|^2|\nabla b| \\
& \ge r^{1-n}\int_{\{b\le r\}} \big[ -C(n) b^2 |Rm|^3 - 2n |Rm|^2 |\nabla b|^2 \big] +4 r^{2-n} \int_{\{ b =r \}} |Rm|^2|\nabla b|,
\end{split}
\end{align}}
where $C(n)>0$ is a dimensional constant; in the last line, we used the identity on Ricci-flat manifolds that $\Delta |Rm|^2 = 2|\nabla Rm|^2 + Rm*Rm*Rm$, hence $\Delta |Rm|^2 \ge -C(n)|Rm|^3.$

Next we observe that $b \leq d$ since $\lim_{d\to 0}b=0$ (see for example \cite[(2.19)]{CM}) and $|\nabla b| \leq 1$ by Theorem \ref{thm: Colding}. Hence, from the assumption that $|Rm|\le \frac{C_0}{d^2}$ for some $C_0>0$, we have $|Rm| \le\frac{C_0}{b^2}$. Together with the assumption that $|\nabla b| \ge \delta$, \eqref{eqn: deriv ineq} implies

\begin{align} \label{eq: deriv ineq 2}
\begin{split}
&\big( r^{3-n} \int_{\{b=r\}}|Rm|^2 |\nabla b|\big)' \\
&\ge -r^{1-n}\int_{\{b\le r\}} \big[\frac{C_0 C(n)}{\delta^2}  |Rm|^2 |\nabla b|^2+2n |Rm|^2 |\nabla b|^2 \big] +4 r^{2-n} \int_{\{ b =r \}} |Rm|^2|\nabla b|.
\end{split}
\end{align}

Define $F:[0,\infty)\to \R$ by
\[F(r) = \int_{\{b \le r\}} |Rm|^2|\nabla b|^2.\]
The coarea formula implies
\[F'(r) = \int_{\{b = r\}} |Rm|^2|\nabla b|.\]
Thus, \eqref{eq: deriv ineq 2} can be rewritten in terms of $F$ to be
\[\left(r^{3-n}F'(r)\right)' \ge -r^{1-n}\left(2n+\frac{C_0 C(n)}{\delta^2}\right)F(r)+4r^{2-n}F'(r),\]
or equivalently
\begin{equation} \label{eq: ineq for F}
F''(r)- \frac{(n+1)}{r} F'(r) + \frac{1}{r^2}\left(2n + \frac{C_0C(n)}{\delta^2}\right) F(r)  \ge 0.
\end{equation}

Let $x\ge y$ be the indicial roots of the homogeneous ODE, i.e. they solve the equation
\[t^2-(n+2)t+\left(2n + \frac{C_0C(n)}{\delta^2}\right)= 0.\]
If $C_0 <\frac{(n-2)^2\delta^2}{4C(n)}$, there are two positive roots; for small $C_0$, $x$ is close to $n$ and $y$ is close to $2$. In particular, $y \le (n+2)/2 < n$.

We observe that defining $h(r)=F(r)/r^x$, \eqref{eq: ineq for F} is equivalent to
\[\left(\frac{h'(r)}{r^{y-x-1}}\right)' \ge 0.\]
It follows from the definition of $F$ that  $F(r)=O(r^n)$ and $F'(r)=O(r^{n-1})$ for $r\ll 1$. Therefore
\begin{equation}\label{eqn:monotonicity2}
\frac{h'(r)}{r^{y-x-1}}=\frac{rF'(r)-xF(r)}{r^y}\to 0 \text{ as } r \to 0.
\end{equation}
Thus $h'(r)\ge 0$ for $r>0$, and
\[F(s) \le s^x \cdot \frac{F(r)}{r^x}\]

whenever $r\ge s>0$. On the other hand, in fact since $|\nabla b|\le 1$,
\[F(r)=\int_{\{b\le r\}}|Rm|^2|\nabla b|^2 \le C_0^2 r^{-4} \cdot \text{vol}(B_p(r))\le Cr^{n-4}\]
by the Bishop-Gromov inequality, where $C=C(C_0, n)$ is a constant. Altogether we have
\[F(s)\le Cs^x \cdot r^{n-4-x}.\]

When $C_0$ is so small that $x=\frac{n+2}{2}+\sqrt{\left(\frac{n-2}{2}\right)^2-\frac{C_0 C(n)}{\delta^2}}>n-4$, we can take the limit $r \rightarrow \infty$ to see that $F(s)=0$ for any $s>0$, thus $|Rm| \equiv 0$.
\end{proof}

\begin{proof}[Proof of Theorem \ref{thm: rigidity2}]
When the Ricci-flat manifold $M$ has Euclidean volume growth, by \cite[Section 2.3]{CM2} the limit $\lim_{b \rightarrow \infty} |\nabla b|=\lim_{b \rightarrow \infty}b/d=:\theta$ exists, where $\theta \in (0,1)$ is determined by the volume growth of $M$. Thus we can choose $r_0$ large so that $|\nabla b| \ge \theta/2$ and  $b/d\le 2\theta$ whenever $b \ge r_0$.

Since $|Rm|$ fails to be smooth where $Rm=0$, we take the standard regularization $u_\delta:=(|Rm|^2+\delta^2)^{1/2}$ of $|Rm|$ where $0<\delta<1$. By the improved Kato inequality for Ricci-flat manifolds (\cite[Corollary 4.10]{BKN}), there exist constants $C=C(n)$ and $\epsilon=\epsilon(n)$ so that $\Delta |Rm|^{1-\epsilon} \ge -  C|Rm|^{2-\epsilon}$ at points where $|Rm|\ne 0$, from which it follows that $\Delta (u_\delta)^{1-\epsilon}\ge -C|Rm|(u_\delta)^{1-\epsilon}$. On the other hand, the integral identity \eqref{eq: int identity} for $u_\delta$ yields 
\[\big(r^{1-n} \int_{\{b=r\}} (u_\delta)^{1-\epsilon}|\nabla b|\big)' = r^{1-n} \int_{\{b\le r\}} \Delta (u_\delta)^{1-\epsilon}.\]
Using these facts we compute for $r\ge r_0$,
\begin{align*}
&\big[r^{n-1}\big(r^{3-n}\int_{\{b=r\}} (u_\delta)^{1-\epsilon}|\nabla b|\big)'\big]'  \\
=&\int_{\{b =  r\}} \big[ b^2 \Delta(u_\delta)^{1-\epsilon} /|\nabla b| - 2n(u_\delta)^{1-\epsilon} |\nabla b| \big]+4\big( r \int_{\{ b =r \}} (u_\delta)^{1-\epsilon}|\nabla b| \big)'\\
&\ge -\int_{\{b =  r\}} \big[Cb^2|Rm|(u_\delta)^{1-\epsilon} /|\nabla b|+2n(u_\delta)^{1-\epsilon} |\nabla b| \big]+4\big( r \int_{\{ b =r \}} (u_\delta)^{1-\epsilon}|\nabla b| \big)'\\
&\ge -\int_{\{b =  r\}} \big[16CC_0(u_\delta)^{1-\epsilon}|\nabla b|+2n(u_\delta)^{1-\epsilon} |\nabla b| \big]+4\big( r \int_{\{ b =r \}} (u_\delta)^{1-\epsilon}|\nabla b| \big)'.
\end{align*}

Define \[F_\delta(r) = \int_{\{ b =r \}} (u_\delta)^{1-\epsilon}|\nabla b|.\] Then the differential inequality above can be written as
$$[r^{n-1}(r^{3-n} F_\delta(r))']' \ge  (-2n- 16CC_0) F_\delta(r) + 4(rF_\delta(r))'$$
which is equivalent to
\begin{equation}\label{eq: ineq for F_2}
r^2 F_\delta''(r) - (n-1)rF_\delta'(r) + ( n-1 +16CC_0)F_\delta(r) \ge 0.
\end{equation}
If $C_0$ is small (precisely, $C_0 < (n-2)^2/64C$), then the indicial roots $x > y$ exist and are positive, with $1 < y<x < n-1$. Arguing as in the proof of Theorem \ref{thm: rigidity}, we see that \eqref{eq: ineq for F_2} implies $0 \le \big( g_\delta'(r)/r^{y-x-1} \big)'$ where $g_\delta(r) := F_\delta(r)/r^x$. Hence, for $r \ge r_0$ we have
\[\frac{g_\delta'(r_0)}{r_0^{y-x-1} r^{x-y+1}} \le g_\delta'(r).\]
Integrating on $(r,\delta^{-1/2})$, we get
\[g_\delta(r) \le g_\delta(\delta^{-1/2}) + \frac{g_\delta'(r_0)}{(x-y)r_0^{y-x-1}}\left(\delta^{\frac{x-y}{2}}-\frac{1}{r^{x-y}}\right).\]

Using \eqref{eq: int identity} again, it is straightforward to calculate that
\[g_\delta'(r_0)=(n-1-x)r_0^{-x-1}\int_{\{b=r_0\}}u_\delta^{1-\epsilon}|\nabla b|+r_0^{-x}\int_{\{b\le r_0\}}\Delta (u_\delta)^{1-\epsilon}|\nabla b|,\]
from which it follows there exists $C(n,r_0)$ so that $|g_\delta'(r_0)|<C(n,r_0)$ for $\delta>0$ small. Moreover, $\lim_{\delta\to 0}g_\delta(\delta^{-1/2})=0$ if $C_0$ is so small that $x>n-1-2(1-\epsilon)$. Thus taking $\delta \to 0$ we obtain
\begin{equation} \label{eq: ineq for g}
\lim_{\delta\to 0}g_\delta(r)=r^{-x}\int_{\{b=r\}}|Rm|^{1-\epsilon}|\nabla b| \le \frac{C(n,r_0)}{(x-y)r_0^{y-x-1}}r^{-x+y}.
\end{equation}
Finally we observe that
\begin{align*}
\int_{b \ge r_0} |Rm|^2 |\nabla b|^2 &= \int_{r_0}^{\infty}\int_{b=s} |Rm|^2 |\nabla b| \,ds\\
&=\int_{r_0}^{\infty}\int_{b=s}|Rm|^{1+\epsilon}|Rm|^{1-\epsilon} |\nabla b| \,ds\\
&\le C_0^{1+\epsilon}\int_{r_0}^{\infty}s^{-2-2\epsilon}\int_{b=s}|Rm|^{1-\epsilon} |\nabla b| \,ds\\
 &\le \frac{C_0^{1+\epsilon}C(n,r_0)}{(x-y)r_0^{y-x-1}} \int_{r_0}^{\infty}s^{y-2-2\epsilon}\,ds,
\end{align*}
where \eqref{eq: ineq for g} was used in the last step. If $C_0$ is so small that $y=\frac{n}{2}-\sqrt{\left(\frac{n-2}{2}\right)^2-CC_0}<1+2\epsilon$, then the integral above is finite. Thus we conclude that $\int_M |Rm|^p < \infty$ for any $p \ge 2$ by H\"{o}lder's inequality. This proves the theorem.

\end{proof}


\begin{thebibliography}{4}

\bibitem[A1]{And} M. Anderson, Ricci curvature bounds and Einstein metrics on compact manifolds, J. Amer. Math. Soc. {\bf 2} (1989), no.~3, 455--490.

\bibitem[A2]{And2} M. Anderson, Convergence and rigidity of manifolds under Ricci curvature bounds, Invent. Math. {\bf 102} (1990), no.~2, 429--445.

\bibitem[BKN]{BKN} S. Bando, A. Kasue, H. Nakajima, On a construction of coordinates at infinity on manifolds with fast curvature decay and maximal volume growth, Invent. Math. {\bf 97} (1989), no.~2, 313--349.

\bibitem[C]{Col} T.~H. Colding, New monotonicity formulas for Ricci curvature and applications. I, Acta Math. {\bf 209} (2012), no.~2, 229--263.

\bibitem[CM1]{CM} T.~H. Colding and W.~P. Minicozzi II, Harmonic functions with polynomial growth, J. Diff. Geom. {\bf 46} (1997), no.~1, 1--77.

\bibitem[CM2]{CM2} T.~H. Colding and W.~P. Minicozzi II, On uniqueness of tangent cones for Einstein manifolds, Invent. Math. {\bf 196} (2014), no.~3, 515--588.

\bibitem[CT]{CT} J. Cheeger, G. Tian, On the cone structure at infinity of Ricci flat manifolds with Euclidean volume growth and quadratic curvature decay, Invent. Math. {\bf 118} (1994), no.~1, 493--571.

\bibitem[CZ]{CZ} L. Cheng, A. Zhu, On the Perelman's reduced entropy and Ricci flat manifolds with maximal volume growth, Math. Ann. {\bf 356} (2013), no.~3, 1107--1116.

\bibitem[HM]{HM} S. Honda, A. Mondino, Gap phenomena under curvature restrictions, preprint, arXiv:2410.04985.

\bibitem[K1]{Kro1} P.~B. Kronheimer, The construction of ALE spaces as hyper-K\"{a}hler quotients, J. Diff. Geom. {\bf 29} (1989), no.~3, 665--683.

\bibitem[K2]{Kro2} P.~B. Kronheimer, A Torelli-type theorem for gravitational instantons, J. Diff. Geom. {\bf 29} (1989), no.~3, 685--697.

\bibitem[KS]{KS} K. Kr\"{o}ncke, \'{A}. Szab\'{o},
Optimal coordinates for Ricci-flat conifolds,
Calc. Var. Partial Differential Equations {\bf 63} (2024), no.~7, Article No.~188.

\bibitem[LW]{LW} S. Lee, F. Wang, Rigidity of Poincar\'{e}-Einstein manifolds with flat Euclidean conformal infinity, preprint, arXiv:2503.06062.

\bibitem[Sh]{Shen} Z. Shen, Some rigidity phenomena for Einstein metrics, Proc. Amer. Math. Soc. {\bf 106} (1990), no.~4, 981--987.

\bibitem[St]{St} M. B. Stenzel, Ricci-flat metrics on the complexification of a compact rank one symmetric space, Manuscripta Math. {\bf 80} (1993), no.~2, 151--164.

\bibitem[TY]{TY} G. Tian, S.-T. Yau, Complete K\"{a}hler manifolds with zero Ricci curvature. I, J. Amer. Math. Soc. {\bf 3} (1990), no.~3, 579--609.

\bibitem[Y]{Yokota} T. Yokota,  Perelman's reduced volume and a gap theorem
for the Ricci flow, Comm. Anal. Geom. {\bf 17} (2009), no.~2, 227--263.

\end{thebibliography}
\end{document}